\newtheorem{thm}{Theorem}[section]
\newtheorem{prop}[thm]{Proposition}
\newtheorem{lemma}[thm]{Lemma}
\theoremstyle{definition}
\numberwithin{equation}{section}
\newcommand{\N}{\mathbb{N}}
\newcommand{\R}{\mathbb{R}}
\begin{document}
\title{Network structured kinetic models of social interactions }
\author{Martin Burger\thanks{Department Mathematik,  Friedrich-Alexander Universit\"at Erlangen-N\"urnberg, Cauerstr. 11, D 91058 Erlangen, Germany. e-mail: martin.burger@fau.de  } }
\maketitle
\begin{abstract}
The aim of this paper is to study the derivation of appropriate meso- and macroscopic models for interactions as appearing in social processes. There are two main characteristics the models take into account, namely a network structure of interactions, which we treat by an appropriate mesoscopic description, and a different role of interacting agents. The latter differs from interactions treated in classical statistical mechanics in the sense that the agents do not have symmetric roles, but there is rather an active and a passive agent. We will demonstrate how a certain form of kinetic equations can be obtained to describe 
such interactions at a mesoscopic level and moreover obtain macroscopic models from monokinetics solutions of those. 

The derivation naturally leads to systems of nonlocal reaction-diffusion equations (or in a suitable limit local versions thereof), which can explain spatial phase separation phenomena found to emerge from the microscopic interactions. We will highlight the approach in three examples, namely the evolution and coarsening of dialects in human language, the construction of social norms, and the spread of an epidemic. 

%
\end{abstract}

\section{Introduction}

The mathematical modelling of social interactions has been a topic of high recent interest. In early years this was naturally a field of qualitative models, which could at most be used to explain few macroscopic statistical data, and hence the interest in computing detailed distributions or spatial dependencies was limited. With the propagation of the internet and in particular the wide spreading of social networks, the field is changing significantly in recent years, since suddenly there is a huge amount of data to which models and predictions can be compared. This change is accompanied with increasing computational power, which allows for microscopic simulations, the corresponding field of agent-based models is of increasing importance within the social sciences and related fields like history or linguistics (cf. e.g. \cite{axelrod,heppenstall,klein}). 

From a mathematical point of view, it is natural to approach the transition from microscopic interaction to macroscopic models with the methods of statistical physics and kinetic theory, yielding (systems of) partial differential equations for distributions, with well-established asymptotic methods to further simplify or to analyze pattern formation (cf. \cite{bardos,bouchut,cercignani,cercignani2,degond,glassey,golse,levermore,pareschitoscani}). Such approaches have been used recently with success to explain macroscopic distributions in socio-economic interactions (cf. e.g. \cite{burgercaffarelli,fraiatosin,helbing,naldi,pareschitoscani,pareschitoscani2,toscanitosin,tosinzanella}) as well as several aspects of opinion formation and polarization (cf. e.g. \cite{boudinsalvarani,brugnatoscani,duering,toscani}). 

An issue that is naturally built into social processes is the network structure of interactions, which is commonly modelled via random networks. In meso- and macroscopic limits the network structure is usually lost, only few general characteristics of the network models feed into the remaining equations. From a rigorous point of view, the network limit poses particular challenges that are only partly resolved (cf. \cite{coppini,delattre}). In this paper we want to take another route towards incorporating a certain network structure into meso- and macroscopic models.  We avoid to describe th detailed network structure of the $N$-particle (resp. agent) system, but rather describe the agent by a structural variable $x$ (that can be considered as the spatial variable), which describes the position of the agent within a network. Then we can associate to each agent at $x$ and a second agent at $x'$ a rate of interaction and consider the agents overall as indistinguishable in the larger space of configurations consisting of their state and position, which reduces the mean-field limit to a standard setting. The limit then yields a kinetic equation with an additional structural variable. 

When the structural variable $x$ is viewed as a spatial variable, there is an interesting connection to the classical modelling of spatial systems with interactions such as reaction-diffusion equations (possibly with nonlocal diffusion). The key difference is that in our approach the particles do not change their position $x$ when interacting with others, while in classical kinetic theory interactions are local in space and only appear when the particles change position. We will comment on the differences between these models and implications to macroscopic equations. Let us mention that with respect to social behaviour the classical approach is very natural for face-to-face interactions, thus basically the only relevant one up to the last century. In modern digital communication, a fully nonlocal interaction without any change of position seems to be more relevant however. 

We will highlight our approach by case studies for three applications. The first concerns the propagation of dialects, which has been a topic a strong recent interest (cf. e.g. \cite{burridge1,burridge2,grieve,glaser,loreto,vazquez}). Using different arguments, a macroscopic PDE model has been obtained by Burridge \cite{burridge1,burridge2}, which is remarkably successfull in explaining the dialect maps in different countries as well as their coarsening. Naturally the structural variable is space and the rate of interactions between $x$ and $x'$ is a frequency to communication, still maximal at close distance. We will show that through the basic assumptions on the interactions made in \cite{burridge1}, we can derive a structured kinetic model. The original model by Burridge is recovered as a monokinetic solution of a Vlasov equation approximation of this model. Moreover, we show that solutions of the Vlasov model converge exponentially to monokinetic solutions in a Wasserstein-type metric. It is apparent that the original kinetic model, respectively a second-order Fokker-Planck type approximation, include further information about the stochastic nature of the process.  
The second case study concerns the emergence of social norms, which has mainly been studied by agent-based models and their direct simulation (cf. e.g. \cite{gavin,hawkins,pereda}). We use a recent agent-based model proposed by Shaw \cite{shaw} and perform analogous reasoning. The properties of the meso- and macroscopic models can easily be used to understand the long-time evolution and the emergence of segregation and coarsening effects as observed in simulations. The final one concerns the spread of an epidemic, actually a classical topic of mathematical modelling, for which we use a novel approach motivated by recent findings on pandemic spread. 

As another consequence of these exemplary cases we work out a frequent very asymmetric nature of interactions. There is an active and a passive agent in each interaction. The active agent chooses some action with some probability depending on the state, but does not change his state due to the interaction. Vice versa the passive agent only changes his state based on the action of the active one. Our approach can be used to derive  macroscopic models and study pattern formation or phase separation effects in such models. 

The overall organization of the paper is as follows: in Section 2 we introduce the basic modelling approach and derive mean-field models via a hierarchy of marginals. We also discuss further approximations for small change in the interactions. In Section 3 we investigate a model for the evolution of dialects (respectively phonetic variables) and show that our approach reproduces the previously proposed macroscopic model by Burridge \cite{burridge1} as a monokinetic solution in a natural Vlasov approximation. We also show that for this case there is a decay of variance in the Vlasov approximation, i.e. the monokinetic solutions approximate the overall dynamics well. In Section 4 we investigate a model for the construction of social norms, which has been studied by agent-based simulations previously. We demonstrate that the model fits into our framework in an analogous way and derive a macroscopic model of similar structure to the dialect model. In Section 5 we discuss a network structured model for the spread of a pandemic disease, which highlights the similarities and differences to conventional nonlocal reaction-diffusion models. We finally conclude and present several open questions in Section 6.

\section{Network-structured Kinetic Equations}

The setup in this paper is as follows: We start with a system of $N$ particles (synonymously called agents), each described by a structural variable $x_i \in \R^d$ and a state $v_i \in \R^s$ for $i=1,\ldots,N$. We will use the notation $z_i =(x_i,v_i) \in \R^{d+s}$ for the phase-space variable. Contrary to the classical transport model, we assume the position to be fixed, i.e., $\frac{dx_i}{dt}  = 0$, it is just used to encode a weighting of the particle interactions with a rate $\omega^N(x_i,x_j)$. Interactions between particles $i$ and $j$ are not assumed to be necessarily symmetric respectively conservative, but instead there is often an active and a passive role.  Let us start with a general two particle interaction between the $i$-the and the $j$-the particle
\begin{align}
v_i' &= v_i + a  ,\\
v_j' &= v_j + b  , 
\end{align} 
where $a$ and $b$ are random variables chosen from a joint probability distribution $\nu_{z_i,z_j}$ on $\R^s \times \R^s$. 

The particularly relevant case for social interactions, as we shall also see in case studies below, is when one of the particles assumes an active and the other a passive role. An example is an opinion expressed by the active particle leading to a change of opinion of the passive one. Often $\nu_{z_i,z_j}$  is concentrated at zero in $a$, refelecting that the active agent does not change state.
  %
Given some initial distribution of the particles, the can be described via the probability measure $\mu_N(\cdot;t)$ on $\R^{(d+s)N}$ for each $t \in \R_+$, whose evolution is governed by 
\begin{equation} \label{Nparticleweak}
\frac{d}{d t} \int \psi(Z) \mu_{N}(dZ_N;t) = \sum_{i \neq j} \int\int \omega^N\left(x_{i}, x_{j}\right) \left(\psi(Z_N^{a,b,i,j})-\psi(Z_N)\right)   ~\nu_{z_i,z_j}(da,db) ~\mu_{N}(dZ_N;t) 
\end{equation}
for each smooth test function $\psi$ on $\R^{(d+s)N}$, $Z_N = (z_1,\ldots,z_N)$, and $Z_N^{a,b,i,j}$ being a modified version of $Z_N$ with $v_i$ and $v_j$ changed to $v_i+a$, $v_j+b$. 
The right-hand side of \eqref{Nparticleweak} can be simplified using 
\begin{align*}
&\int\int \omega^N\left(x_{i}, x_{j}\right) \left(\psi(Z_N^{a,b,i,j})-\psi(Z_N)\right)   ~\nu_{z_i,z_j}(da,db) ~\mu_{N}(dZ_N;t) \\
& \qquad = \int \omega^N\left(x_{i}, x_{j}\right) \left(\int \psi(Z_N^{a,b,i,j})   ~\nu_{z_i,z_j}(da,db) - \psi(Z_N) \right) \mu_{N}(dZ_N;t)
\end{align*}

We can verify the well-posedness of this evolution equation as in the Picard-Lindel\"of Theorem on the space of Radon measures together with conservation of mass and nonnegativity) if $\omega^N$ and $\nu_{z_i,z_j}$ depend continuously on the states:
\begin{thm}
Let $\mu_N^0 \in {\cal P}(\R^{(d+s)N})$, $\omega^N \in C_b( \R^{2d})$, and $\nu \in C_b(\R^{d+s} \times \R^{d+s};{\cal P}(\R^{2s}))$. 
Then there exists a unique solution 
$$ \mu_N \in C^1(\R_+; {\cal P}(\R^{(d+s)N}))$$ 
of \eqref{Nparticleweak} with initial value $\mu_N^0$.
\end{thm}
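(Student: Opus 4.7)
The plan is to recast the weak equation as a linear ODE in a Banach space of measures and then verify that the solution remains in the probability simplex. Introduce the generator $L^\ast$ acting on bounded continuous test functions via
\[
(L^\ast \psi)(Z_N) = \sum_{i\neq j} \omega^N(x_i,x_j) \int \bigl(\psi(Z_N^{a,b,i,j}) - \psi(Z_N)\bigr)\,\nu_{z_i,z_j}(da,db).
\]
The continuity of the affine shift $(Z_N,a,b) \mapsto Z_N^{a,b,i,j}$, the weak-$\ast$ continuity of $(z_i,z_j) \mapsto \nu_{z_i,z_j}$ built into the hypothesis $\nu \in C_b(\R^{d+s}\times\R^{d+s};\mathcal{P}(\R^{2s}))$, and the boundedness of $\omega^N$ together imply that $L^\ast$ maps $C_b(\R^{(d+s)N})$ into itself with $\norm{L^\ast\psi}_\infty \leq 2N(N-1)\norm{\omega^N}_\infty\norm{\psi}_\infty$. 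Its predual $L$, defined by $\langle L\mu,\psi\rangle = \langle \mu, L^\ast \psi\rangle$, is then a bounded linear operator on the Banach space $(\M(\R^{(d+s)N}), \norm{\cdot}_{TV})$, and \eqref{Nparticleweak} takes the form $\frac{d}{dt}\mu_N = L\mu_N$. The Banach-space Picard--Lindel\"of theorem immediately produces a unique global solution $\mu_N \in C^1(\R_+;\M(\R^{(d+s)N}))$, namely $\mu_N(t) = e^{tL}\mu_N^0$.

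It remains to show that this abstract solution lives in $\mathcal{P}(\R^{(d+s)N})$ for every $t \geq 0$. Conservation of mass follows by substituting the constant test function $\psi \equiv 1$ into \eqref{Nparticleweak}: the right-hand side vanishes identically, so $\int \mu_N(dZ_N;t) \equiv 1$. Nonnegativity is the more delicate point, and I would handle it by decomposing $L = G - M$ into a gain and a loss part, where $M$ is multiplication by the bounded nonnegative function $M(Z_N) = \sum_{i\neq j}\omega^N(x_i,x_j)$ and $G$ is defined dually by
\[
\langle G\mu,\psi\rangle = \sum_{i\neq j}\int \omega^N(x_i,x_j) \int \psi(Z_N^{a,b,i,j})\,\nu_{z_i,z_j}(da,db)\,\mu(dZ_N).
\]
Since $\psi \geq 0$ pointwise implies $G^\ast\psi \geq 0$ pointwise, duality yields that $G$ preserves nonnegativity. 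Writing the evolution in its Duhamel form
\[
\mu_N(t) = e^{-tM}\mu_N^0 + \int_0^t e^{-(t-s)M}\,G\mu_N(s)\,ds,
\]
where $e^{-tM}$ denotes pointwise multiplication by $e^{-tM(Z_N)}$, and solving by Picard iteration starting from $\mu_N^{(0)}\equiv 0$ expresses $\mu_N(t)$ as the limit of an increasing sequence of nonnegative measures, establishing $\mu_N(t) \geq 0$.

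The existence and uniqueness themselves are essentially routine once the equation is framed in this abstract linear setting, so the main obstacle is rather the verification that the abstract solution really sits in the probability simplex; the splitting into gain and loss semigroups above is the standard trick for Boltzmann-type equations and settles it cleanly. A secondary technical point to check is that the weak-$\ast$ continuity of $(z_i,z_j)\mapsto\nu_{z_i,z_j}$ transfers into pointwise continuity of the integrated expressions $\int \psi(Z_N^{a,b,i,j})\nu_{z_i,z_j}(da,db)$ against bounded continuous $\psi$, but this is exactly the defining property of weak-$\ast$ convergence of probability measures applied to the continuous bounded integrand $\psi(Z_N^{a,b,i,j})$.
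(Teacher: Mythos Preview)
Your proof is correct and follows essentially the same strategy as the paper: recast \eqref{Nparticleweak} as a linear ODE $\partial_t\mu_N = L\mu_N$ with $L$ bounded on the space of Radon measures, invoke Picard--Lindel\"of, and check mass conservation via $\psi\equiv 1$. The only difference is in the positivity step: the paper multiplies by $e^{\lambda(N^2-1)t}$ with $\lambda\geq\Vert\omega^N\Vert_\infty$ so that the right-hand side becomes manifestly nonnegative on nonnegative test functions, whereas you use the equivalent gain--loss Duhamel splitting and Picard iteration; these are two standard packagings of the same idea (shifting the generator so that the remaining part is positivity-preserving), and either works without difficulty here.
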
 
\begin{proof}
The evolution is of the form
$$ \partial_t \mu_N = {\cal L}^* \mu_N ,$$
with ${\cal L^*}: {\cal P}(\R^{(d+s)N}) \rightarrow {\cal P}(\R^{(d+s)N})$ defined as the adjoint of
$${\cal L}: C_b(\R^{(d+s)N}) \rightarrow C_b(\R^{(d+s)N}), \quad
\psi \mapsto \sum_{i \neq j} \int \omega^N\left(x_{i}, x_{j}\right) \left(\psi(Z_N^{a,b,i,j})-\psi(Z_N)\right)   ~\nu_{z_i,z_j}(da,db).$$
With our assumptions on $\nu$ and $\omega^N$ the linear operator ${\cal L}$ is well-defined and bounded and so is ${\cal L}^*$. Thus, the Picard-Lindel\"of theorem immediately yields existence and uniqueness of a solution in the Banach space of Radon measures. The fact that $\mu^N$ preserves mass one in time follows immediately with $\psi \equiv 1$ and the preservation of nonnegativity follows with the form
\begin{align*} & \frac{d}{d t} \left( e^{\lambda(N^2-1)t}\int \psi(Z) \mu_{N}(dZ_N;t) \right)= \\ & \qquad e^{\lambda(N^2-1)t} \sum_{i \neq j}   
\int \omega^N\left(x_{i}, x_{j}\right) \int \psi(Z_N^{a,b,i,j})   ~\nu_{z_i,z_j}(da,db) \mu_{N}(dZ_N;t) + \\
& \qquad e^{\lambda(N^2-1)t} \sum_{i \neq j}  \int  (\lambda- \omega^N\left(x_{i}, x_{j}\right) ) \psi(Z_N) \mu_{N}(dZ_N;t)    \end{align*}
and $\lambda \geq \Vert \omega^N \Vert_\infty$.
\end{proof}

Let us mention that alternatively we can derive existence and uniqueness in the $W_1$-Wasserstein metric (equivalent to the bounded Lipschitz metric) 
$$ d_{W_1}(\mu_N;\tilde \mu_N) = \sup_{\psi \in C^{0,1}, \Vert \psi \Vert \leq 1} \int \psi(z) \mu_N(dz) - \int \psi(z) \tilde \mu_N(dz) , $$
if $\omega^N$ and the maps 
$$ (z,\tilde z) \mapsto \int \vert a \vert \nu_{z,\tilde z}(da,db), \quad (z,\tilde z) \mapsto \int \vert b \vert \nu_{z,\tilde z}(da,db) $$
are Lipschitz-continuous.

%

\subsection{Mean-Field Limit}

In the following we shall derive a mean-field limit of the evolution equation \eqref{Nparticleweak} using a key assumption on the weights, namely
\begin{equation}
\omega^N(x,y) = \frac{1}N w(x,y) \qquad \forall (x,y) \in \R^d \times \R^d,
\end{equation}
which is a natural scaling. Then \eqref{Nparticleweak} reads
\begin{equation}  
\frac{d}{d t} \int \psi(Z) \mu_{N}(dZ_N;t) = \frac{1}N \sum_{i \neq j} \int\int w\left(x_{i}, x_{j}\right) \left(\psi(Z_N^{a,b,i,j})-\psi(Z_N)\right)   ~\nu_{z_i,z_j}(da,db) ~\mu_{N}(dZ_N;t) 
\end{equation}
and we can derive in a standard way a BBGKY-type hierarchy for the marginals
\begin{equation}
\mu_{N:k} = \int \ldots \int \mu_N(dz_{k+1} \ldots dz_N). 
\end{equation} 
Equations for the marginals can be derived easily when using a test function $\psi_k$ that only depends on 
$Z_k =(z_1,\ldots,z_k)$, which yields for $k=1,\ldots,N$
\begin{align*}
& \frac{d}{d t} \int \psi_k(Z_k) \mu_{N:k}(dZ_k;t) = \\ & \qquad \frac{1}N \sum_{1 \leq i \neq j \leq k} \int\int w\left(x_{i}, x_{j}\right) \left(\psi_k(Z_k^{a,b,i,j})-\psi_k(Z_k)\right)   ~\nu_{z_i,z_j}(da,db) ~\mu_{N:k}(dZ_k;t) +  \\
& \qquad \frac{N-k}N \sum_{i=1}^k  \int\int w\left(x_{i}, x_{k+1}\right) \left(\psi_k(Z_{k}^{a,i})-\psi_k(Z_{k})\right)   ~\nu_{z_i,z_{k+1}}(da,db) ~\mu_{N:k+1}(dZ_{k+1};t) + \\
& \qquad \frac{N-k}N \sum_{i=1}^k  \int\int w\left(x_{i}, x_{k+1}\right) \left(\psi_k(Z_{k}^{b,i})-\psi_k(Z_{k})\right)   ~\nu_{z_{k+1},z_i}(da,db) ~\mu_{N:k+1}(dZ_{k+1};t).
\end{align*}
Here we use the notation $Z_{k}^{a/b,i}$ for a version of $Z_k$ with $z_i$ changed to $z_i + a/b$. In the infinite limit $N\rightarrow \infty$ we formally arrive at the infinite hierarchy
\begin{align*}
& \frac{d}{d t} \int \psi_k(Z_k) \mu_{\infty:k}(dZ_k;t) = \\  
& \qquad  \sum_{i=1}^k  \int\int w\left(x_{i}, x_{k+1}\right) \left(\psi_k(Z_{k}^{a,i})-\psi_k(Z_{k})\right)   ~\nu_{z_i,z_{k+1}}(da,db) ~\mu_{\infty:k+1}(dZ_{k+1};t) + \\
& \qquad  \sum_{i=1}^k  \int\int w\left(x_{i}, x_{k+1}\right) \left(\psi_k(Z_{k}^{b,i})-\psi_k(Z_{k})\right)   ~\nu_{z_{k+1},z_i}(da,db) ~\mu_{\infty:k+1}(dZ_{k+1};t)
\end{align*}
for $k \in \N$.

\subsection{Kinetic Equation}

The infinite hierarchy of marginals allows for a solution in terms of product measures $\mu_{\infty:k} = \mu^{\otimes k}$, which characterizes the mean-field limit. The single-particle measure $\mu=\mu_{\infty:1}$ solves the kinetic equation
\begin{align*}
\frac{d}{d t} \int \varphi(z) \mu (dz;t) =& \int\int\int w\left(x,\tilde x\right) 
\left(\varphi(z^a)-\varphi(z)\right)   ~\nu_{z,\tilde z}(da,db)~\mu(d\tilde z;t)  ~\mu(dz;t) 
+ \\ & \int\int\int w\left(x,\tilde x\right) 
\left(\varphi(z^b)-\varphi(z)\right)   ~\nu_{\tilde z,z}(da,db)~\mu(d\tilde z;t)  ~\mu(dz;t) 
\end{align*}

We define $\eta$ as the projection of the measure $\mu$ to the structural variables, i.e.,
\begin{equation}
\eta(\cdot;t) = \int \mu(\cdot,dv;t) .  \label{eq:nudefinition}
\end{equation}
By using test functions of the form $\varphi(z) = \psi(x)$ we immediately see 
$$ \frac{d}{d t} \int \psi(x) \eta(dx;t) = \frac{d}{d t} \int \varphi(z) \mu (dz;t) = 0, $$
thus it is straight-forward to show the following result:
\begin{lemma}
Assume $\eta \in C(0,T;{\cal P}(\R^d))$ is well-defined by \eqref{eq:nudefinition}. Then $\eta$ is stationary, i.e. 
$\eta(\cdot;t) = \eta(\cdot;0)$ for all $t \in [0,T]$.
\end{lemma}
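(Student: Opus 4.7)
The plan is to exploit the structural invariance of the interaction rules: both updates $v \mapsto v+a$ and $v \mapsto v+b$ act only on the state coordinate, leaving the position coordinate $x$ untouched. Since $\eta$ is the $x$-marginal of $\mu$, test functions that depend on $x$ alone should be annihilated by the collision operator on the right-hand side of the kinetic equation.

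Concretely, I would fix an arbitrary $\psi \in C_b(\R^d)$ and set $\varphi(z) = \psi(x)$. Because $\psi$ is bounded and continuous, $\varphi$ is an admissible test function for the kinetic equation, and by definition of $\eta$
\[
\int \varphi(z)\,\mu(dz;t) = \int \psi(x)\,\eta(dx;t).
\]
Now the modified phase-space points appearing in the kinetic equation are $z^a = (x,v+a)$ and $z^b = (x,v+b)$, whose $x$-components coincide with that of $z$. Hence
\[
\varphi(z^a) - \varphi(z) = \psi(x) - \psi(x) = 0, \qquad \varphi(z^b) - \varphi(z) = 0,
\]
pointwise in $(z,\tilde z,a,b)$. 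Plugging this into both terms on the right-hand side of the kinetic equation shows that it vanishes identically (the integrands are zero, and boundedness of $w$ together with $\nu_{z,\tilde z}$ and $\mu$ being probability measures makes Fubini trivially applicable).

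Therefore $\frac{d}{dt}\int \psi(x)\,\eta(dx;t) = 0$ for every $\psi \in C_b(\R^d)$, so $\int \psi\,d\eta(\cdot;t)$ is constant in $t$. Combined with the assumed continuity $\eta \in C([0,T];\mathcal{P}(\R^d))$, this gives $\eta(\cdot;t) = \eta(\cdot;0)$ in the weak-$*$ sense, and since $C_b(\R^d)$ separates probability measures on $\R^d$ this yields the claimed equality of measures for all $t \in [0,T]$. There is no genuine obstacle here beyond verifying the admissibility of $\varphi(z) = \psi(x)$ as a test function; the statement is essentially a tautology encoding that interactions do not relocate agents in the network.
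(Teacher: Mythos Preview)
Your proof is correct and follows exactly the approach the paper indicates: choose test functions $\varphi(z)=\psi(x)$ depending only on the structural variable, observe that $z^a$ and $z^b$ share the $x$-coordinate of $z$ so the collision terms vanish, and conclude $\frac{d}{dt}\int\psi\,d\eta=0$. You have merely supplied more detail (admissibility of the test function, separation of measures) than the paper, which treats the result as immediate.
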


The stationarity of $\eta$ is a natural consequence of our modelling assumption that the network does not change. 
For higher moments we do not get equally simple results, e.g. the evolution of the first moment in $v$ is determined by 
\begin{align*}
\frac{d}{d t} \int v \mu (dz;t) =& \int\int W(z,\tilde z) 
 ~\mu(d\tilde z;t)  ~\mu(dz;t) 
 \end{align*}
with
$$ W(z,\tilde z)  = w\left(x,\tilde x\right)  \int a ~\nu_{z,\tilde z}(da,db) + \int b ~\nu_{\tilde z,z}(da,db). $$

\subsection{Vlasov Approximation}
 
As usual for kinetic equations we can proceed to local approximations if the changes $a$ and $b$ are small. If their higher order moments are negligible compared to the expectations, we can proceed in a straight-forward way to a Vlasov approximation, which is given in weak formulation
\begin{align}
\frac{d}{d t} \int \varphi(z) \mu (dz;t) =& \int\int  
\nabla_v \varphi(z) K(z,\tilde z)~\mu(d\tilde z;t)  ~\mu(dz;t) \label{eq:transportweak}
\end{align}
with 
\begin{equation}
K(z,\tilde z) = w(x,\tilde x) \left(\int a   ~\nu_{z,\tilde z}(da,db) + \int b ~\nu_{\tilde z,z}(da,db) \right). \label{eq:Kdefinition}
\end{equation} 
In order to perform suitable asymptotic analysis it is more convenient to assume that there is a small parameter $\epsilon$ scaling the interactions and
$$ \int (v^a - v)   ~\nu_{z,\tilde z}(da,db) + \int (v^b - v)  ~\nu_{\tilde z,z}(da,db)  = {\cal O}(\epsilon^\alpha)$$
for some $\alpha > 0$ and rescale time by $\epsilon^\alpha$ as well. Then we obtain instead \eqref{eq:transportweak}
with
\begin{equation}
K(z,\tilde z) = \lim_{\epsilon \downarrow 0} ~ \epsilon^{-\alpha} ~w(x,\tilde x) \left(\int a   ~\nu_{z,\tilde z}(da,db) + \int b  ~\nu_{\tilde z,z}(da,db) \right). \label{eq:Kdefinitioneps}
\end{equation} 

The corresponding strong form is 
\begin{equation} 
\partial_t \mu + \nabla_v \cdot (  \mu \int K(z,\tilde z)~\mu(d\tilde z;t) ) =0, \label{eq:transportstrong}
\end{equation}  
which is reminiscent of the classical Vlasov equation, however without transport term in $x$ and possibly a strong interaction in the $v$-space. it is well-known that the stability of this type of equations for sufficiently smooth $W$ can be derived in  Wasserstein metrics (cf. \cite{dobrushin,golse}) or equivalently via the method of characteristics (cf. \cite{braunhepp,golse,neunzert}).
In the case of \eqref{eq:transportstrong}, \eqref{eq:Kdefinition} the characteristic curves are given by the solutions of
\begin{equation}
\frac{dX}{dt}(z,t) = 0, \qquad \frac{dV}{dt}(z,t) = \int K(X(z,t),V(z,t),X(\tilde z,t),V(\tilde z,t)) \mu_0( d\tilde z)
\end{equation}
with initial values $X(z,0) = x$, $V(z,0)=v$. Due to the stationarity of $X$ we can compute $X(x,v,t)$ and formulate the characteristics solely in $V$ as 
\begin{equation}
\frac{dV}{dt}(z,t) = \int K(x,V(z,t),\tilde z,V(\tilde z,t)) \mu_0( d\tilde z).
\end{equation}
If $K$ is sufficiently regular, in particular Lipschitz with respect to $V$, the existence and uniqueness as well as stability estimated can again be obtained by ODE techniques.

Similar to the analysis along characteristics, we can also find a particular class of solutions correspoinding to monokinetic solutions in classical kinetic theory. 
Monokinetic solutions are of the form $\mu(dz;t) = \eta(dx) \otimes \delta_{V(x,t)}(dv), $
with a nonnegative Radon measure $\eta$ on $\R^d$ and $V$ being a solution of 
\begin{equation} 
\partial_t V(x,t) = \int K(x,V(x,t),\tilde x,V(\tilde x,t))~\eta(d\tilde x)   . 
\label{eq:monokinetic}
\end{equation}
Note that as before $\eta$ is a stationary measure, which is due to the stationarity of characteristics in $x$-space. Under suitable properties of the kernel $K$ (respectively the measure $\nu$) we may find exponentially fast convergence of solutions of the Vlasov equation to monokinetic ones with initial values 
$  V(x,0) = \int v \mu_0(dz)$, as we shall see in case studies below. Then the equation \eqref{eq:monokinetic} for $V$ is the relevant one to understand the dynamics and possible pattern formation. Indeed it is an interesting nonlinear and nonlocal equation, which can yield rich dynamics such as phase separation and coarsening, again illustrated below in examples. In other cases it can be relevant to study the full network-structured kinetic equation respectively its Vlasov approximation. 
 
\subsection{Fokker-Planck Approximation}

For a better approximation of the variance the second moment can be included in order to obtain a nonlinear and nonlocal Fokker-Planck equation, in weak form
\begin{align}
\frac{d}{d t} \int \varphi(z) \mu (dz;t) =& \int\int  
\nabla_v \varphi(z) K(z,\tilde z)~\mu(d\tilde z;t)  ~\mu(dz;t)  \nonumber
\\& \int\int  
\nabla_v^2 \varphi(z) : A(z,\tilde z)~\mu(d\tilde z;t)  ~\mu(dz;t)  \label{eq:fokkerplanckweak}
\end{align}
with $K$ defined by \eqref{eq:Kdefinition}, $:$ denoting the Frobenius scalar product
\begin{equation}
A(z,\tilde z) = \frac{1}2 w(x,\tilde x) \left(\int a \otimes a   ~\nu_{z,\tilde z}(da,db) + \int b \otimes b ~\nu_{\tilde z,z}(da,db) \right). \label{eq:Adefinition}
\end{equation} 
In strong form the Fokker-Planck equation becomes
$$ \partial_t \mu + \nabla_v \cdot (  \mu \int K(z,\tilde z)~\mu(d\tilde z;t) ) = \nabla \cdot (   \nabla \cdot (   \mu \int A(z,\tilde z)~\mu(d\tilde z;t)).$$
We leave a detailed discussion of the analysis of the second order equation for typical interactions as considered in the examples later to future research.

\subsection{Variants: Discrete Structures or States} 

There are several variants of the model we have formulated above in a purely continuum setting. However, there are some variants of the model in discrete or semidiscrete settings. An obvious case is related to a finite set of structural variables $x \in \{x_1,\ldots,x_M\}$. This can be set up in an analogous way as the model above, choosing a measure of the form
$$ \mu(dx,dv;t) = \frac{1}M \sum_{i=1}^M \delta_{x_i}(dx) \lambda_i(dv;t). $$
The weights $w$ need to be specified only for the discrete values $x_i$. 

Another semidiscrete case concerns the state variables $v$, between which the agents oscillates. Such a model arises as a special case of our approach if $\nu_{z,\tilde z}$ and $\mu(\cdot,dv;t)$ are concentrated at a finite number of possible states, with transitions $a$ and $b$ such that this finite state space remains invariant. 

\section{Case Study: Evolution of Dialects}

The first model we study in our framework is related to the evolution of dialects, as discussed by Burridge \cite{burridge1}. We will rederive this model as a monokinetic solution to a network-structured equation, respectively a spatially local approximation.

The model by Burridge \cite{burridge1} is based on the memory of the way certain vowels are used in words, which change during interactions with others (i.e. hearing them speak). Under the assumption that there are $M$ ways to use that vowel, the memory of each agent is of the form ${\bf v} = (v_1,\ldots,v_M)$ being an element of the convex set
$$ K=\{v \in \mathbb{R}^M~|~v_i \geq 0, \sum v_j = 1 \}, $$
i.e. $v_i$ is perceived as a relative frequency of the appearance of certain words. When using the vowel in conversations, an agent with memory ${\bf v}$ will choose variant $i$ with probability $p_i({\bf v})$, the expression proposed in \cite{burridge1} is
\begin{equation} \label{eq:piburridge}
p_i({\bf v}) = \frac{v_i^\alpha}{\sum_j v_j^\alpha} 
\end{equation}
with $\alpha > 1$ in order to give stronger weight to those with highest memory. We will also use the notation 
\begin{equation} {\bf p}: K \rightarrow K, {\bf v} \mapsto (p_i({\bf v}))_i .\end{equation}
It can be shown that with the above choice of ${\bf p}$ with $\alpha \geq 1$ we obtain a monotone invertible map ${\bf p}: K \rightarrow K$ (cf. \cite{doss}).

The collisions are due to hearing a certain variant, with post-collisional memory
$$  {\bf v}_i'= \frac{1}{1+\gamma} {\bf v} + \frac{\gamma}{1+\gamma} {\bf e}_i$$
in case the speaker (active agent) has chosen variant $i$, with ${\bf e}_i$ being the $i$-th unit vector.
This leads to the following formula for the pre-collisional memory
$$ {\bf v}_i^*= (1+\gamma) {\bf v} - \gamma {\bf e}_i.$$
Here $\gamma > 0$ is a parameter related to the weight given to the last appearance compared to the long-term memory. It is natural to think of $\gamma$ as a small parameter, since a single appearance of a variant will have low impact. 

\subsection{Boltzmann Equation}

The dialect model can be put in a semidiscrete state setting corresponding to our general framework above. There are only $M$ different transitions possible, the total state space is however continuous due to the dependence of the transitions on ${\bf v}$. 
$$ \nu_{z,\tilde z}(da,db) = \delta_{0}(da) \otimes \sum_{i=1}^M p_i({\bf  v}) \delta_{\frac{\gamma}{1+\gamma} ({\bf e}_i-{\bf \tilde v})}(db)$$
%
Thus, the corresponding Boltzmann-type equation for the evolution of the measure $\mu$ is given by
\begin{equation}
\frac{d}{dt} \int \varphi(z) \mu(dz;t) = \sum_{i=1}^M 
\int   \int w(x,\tilde x) ( \varphi( z_i') - \varphi( z))  p_i({\bf \tilde v}) \mu(d\tilde z;t) \mu (dz;t) 
\end{equation}
with $z_i'=(x,{\bf v}_i')$. Note that due to $ \sum_{i=1}^M  p_i({\bf v}) = 1$ for all ${\bf v}$ we can simplify the loss term to
$$  \sum_{i=1}^M  \int   \int   w(x,\tilde x) \varphi( z)   p_i({\bf \tilde v}) \mu(d\tilde z;t) \mu (dz;t)  = 
\int   \kappa(  x) \varphi( z) \mu(d  z;t)$$
with
$$ \kappa(  x) = \int w(x,\tilde x)  \mu(d\tilde z;t) =  \int w(x,\tilde x) \rho(\tilde x)~d\tilde x . $$
Note that we used the stationarity of $\int \mu(\cdot,dv;t)$ to derive the stationary coefficient $\kappa$.

In the remainder of this chapter we shall assume that $\mu$ is absolutely continuous with respect to the Lebesgue measure on $\R^d \times K$ and write $\mu = f ~dz$ with a probability measure $f$. Moreover, we define the spatial density 
\begin{equation}
\rho(x,t)  = \int f(x,v,t)~dv  .  \label{eq:rhodefinition}
\end{equation}

For this special interaction we can also get an equation for the mean value
$$ V(x;t)   = \int v f(x,{\bf v};t)~d{\bf v} $$
by choosing $\varphi(z) = \psi(x) \otimes v$, namely
\begin{align*}
\frac{d}{dt} \int \psi(x) V(x,t)~dx =& \sum_{i=1}^M 
\int     w(x,\tilde x) \psi(x) (\frac{\gamma}{1+\gamma} ({\bf e}_i-{\bf \tilde v}) )  p_i({\bf \tilde v})f(\tilde z;t)d\tilde z f(z;t)~dz \\
=& \frac{\gamma}{1+\gamma} \sum_{i=1}^M 
\int  \int  \psi(x) w(x,\tilde x)  \rho(x) {\bf p}({\bf \tilde v}) f(\tilde z;t)d\tilde z  dx - \\ &
\frac{\gamma}{1+\gamma} \int \psi(x) \kappa(x) V(x,t) ~dx.
 \end{align*}
This a non-closed equation if ${\bf p}$ is nonlinear, in particular we see that ${\bf v}$ is not a collision invariant, which is due to the asymmetric structure of the interactions.

\subsection{Vlasov Approximation} 

Noticing that naturally  $\gamma$ is a small parameter we may perform a (formal) asymptotic as $\gamma \rightarrow 0$ in order to derive a Vlasov equation. As above, with an additional rescaling of time by $\frac{\gamma}{1+\gamma}$ we arrive at the equation in weak form
\begin{align*} 
&\frac{d}{dt} \int  \varphi(x,{\bf v}) f(x,{\bf v},t)~dz =   -
\int w(x,\tilde x)   \nabla_{\bf v } \varphi(x,{\bf v}) f(x,{\bf v},t)\int    f(\tilde x,{\bf \tilde v},t) ( {\bf v} - {\bf p}({\bf \tilde v}))  ~d\tilde z~dz
\end{align*}
respectively in strong form
\begin{equation}  \label{eq:dialecttransport}
 \partial_t f(x,{\bf v},t) = \nabla_{\bf v} \cdot \left( f(x,{\bf v},t)  \int w(x,\tilde x)  f(\tilde x,{\bf \tilde v},t) ( {\bf v} - {\bf p}({\bf \tilde v}))  ~d\tilde z  \right) ,
\end{equation}


\subsubsection*{Monokinetic Solutions}

The monokinetic solutions in the case of the dialect model are of the form $ \rho(x) \delta_{v - V(x,t)}, $
where $V$ solves the nonlocal equation
$$ \partial_t V(x,t) =   \int w(x,\tilde x)   ({\bf p}(V(\tilde x,t)) -   V(x,t) ) \rho(\tilde x) d\tilde x$$
It is instructive to rewrite the model as 
\begin{equation} \rho(x) \partial_t V(x,t) = - \rho(x) \kappa(x)  (V - {\bf p}(V)) + \int w(x,\tilde x)   \rho(x)\rho(\tilde x) ({\bf p}(V(\tilde x,t)) -  {\bf p}(V(x,t))dx, \label{eq:dialectmonokinetic}
\end{equation}
which highlights its structure as a nonlocal reaction-diffusion equation. The first term is a multistable reaction, and it is easy to figure out that its stable steady states are in the corners of $K$, which corresponds to phase separation. The second term is a nonlocal diffusion operator acting on ${\bf p}(V)$, which is known to promote coarsening behaviour as in the celebrated Allen-Cahn equation (cf. \cite{allencahn,duncan,kohnotto}). 

Let us mention an alternative modelling approach, which is more convenient in literature: assuming that agents interact only locally and move independently (with the same kind of kernel), we would obtain the more standard nonlocal reaction-diffusion model
\begin{equation} \rho(x) \partial_t V(x,t) = - \rho(x) \kappa(x)  (V - {\bf p}(V)) + \int w(x,\tilde x)   \rho(x)\rho(\tilde x)  V(\tilde x,t)-  V(x,t)dx.\label{eq:dialectreactiondiffusion}
\end{equation}
The key difference to \eqref{eq:dialectmonokinetic} is the linearity  of the nonlocal diffusion term, it remains to understand the implications. 

\subsubsection*{Concentration to Monokinetic Solutions}

In the following we investigate the concentration behaviour of solutions to \eqref{eq:dialecttransport} in the $v$-space. For this sake we compute
We denote the support of $\rho$ by $\Omega \subset \R^d$, assuming $\Omega$ is a regular domain, and perform all integrations with respect to $x$ on $\Omega$.
\begin{prop}
Let $f$ be a sufficiently regular weak solution of \eqref{eq:dialecttransport} and let 
$${\bf \overline{V}}(x,t)  = \frac{1}{\rho(x)} \int {\bf v} f(x,{\bf v};t) d{\bf v}  $$
denote the expectation of  ${\bf v}$. Then the quadratic variation in $v$, given by
$$ {\cal V}(t) = \int \int \vert{\bf v} - {\bf \overline{V}}(x,t) \vert^2 f(x,{\bf v},t)~dz,$$
is nonincreasing in time, in particular it is increasing as long $f$ is not concentrated at ${\bf \overline{V}}(x,t)$ on the support of $\kappa$. If there exists a positive constant $\kappa_0$ such that $\kappa_0 \leq \kappa(x)$ for almost all $x\in \Omega$, then
$$ {\cal V}(t) \leq e^{-2 \kappa_0 t} {\cal V}(0).$$
\end{prop}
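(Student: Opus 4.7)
The plan is to apply the weak form \eqref{eq:dialecttransport} to two carefully chosen test functions and then combine the resulting identities through the algebraic decomposition
$$ {\cal V}(t) \;=\; \int\int |{\bf v}|^2 f(x,{\bf v},t)~dz \;-\; \int \rho(x)\,\vert {\bf \overline{V}}(x,t)\vert^2~dx, $$
which holds because $\int {\bf v}\,f(x,{\bf v},t)~d{\bf v} = \rho(x){\bf \overline{V}}(x,t)$ and $\rho$ is stationary by the earlier lemma. Since $K$ is compact, all polynomial-in-${\bf v}$ test functions are admissible.

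First I would derive the first-moment equation. Taking $\varphi(x,{\bf v})=\psi(x){\bf v}$ in \eqref{eq:dialecttransport} and carrying out the $\tilde{\bf v}$-integration yields, with ${\bf P}(\tilde x,t)\eqdef \int {\bf p}(\tilde{\bf v})f(\tilde x,\tilde{\bf v},t)~d\tilde{\bf v}$,
$$ \rho(x)\,\partial_t {\bf \overline{V}}(x,t) \;=\; -\rho(x)\kappa(x)\,{\bf \overline{V}}(x,t) \;+\; \rho(x)\int w(x,\tilde x)\,{\bf P}(\tilde x,t)~d\tilde x . $$
Next I would take $\varphi(x,{\bf v})=\vert {\bf v}\vert^2$; using $\nabla_{\bf v}\vert {\bf v}\vert^2 = 2{\bf v}$ and the same integration over $\tilde{\bf v}$ gives
$$ \frac{d}{dt}\int |{\bf v}|^2 f~dz \;=\; -2\int \kappa(x)\!\int |{\bf v}|^2 f(x,{\bf v},t)~d{\bf v}~dx \;+\; 2\int \rho(x)\,{\bf \overline{V}}(x,t)\cdot\!\int w(x,\tilde x){\bf P}(\tilde x,t)~d\tilde x~dx. $$

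Then I would differentiate the second term of the decomposition: $\frac{d}{dt}\int \rho(x)\vert {\bf \overline{V}}\vert^2~dx = 2\int \rho(x)\,{\bf \overline{V}}\cdot\partial_t{\bf \overline{V}}~dx$, and substitute the first-moment equation. Subtracting from the $\vert {\bf v}\vert^2$-identity, the ${\bf P}$-terms cancel exactly and what remains is
$$ \frac{d}{dt}{\cal V}(t) \;=\; -2\int \kappa(x)\Bigl[\int |{\bf v}|^2 f~d{\bf v} - \rho(x)\vert {\bf \overline{V}}(x,t)\vert^2\Bigr]dx \;=\; -2\int \kappa(x)\int \vert {\bf v} - {\bf \overline{V}}(x,t)\vert^2 f(x,{\bf v},t)~d{\bf v}~dx, $$
where the last equality is the pointwise-in-$x$ variance decomposition. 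The right-hand side is manifestly nonpositive, and strictly negative unless $f(x,\cdot,t)$ is a Dirac mass at ${\bf \overline{V}}(x,t)$ for almost every $x$ in the support of $\kappa$. Under $\kappa(x)\ge \kappa_0$ the right-hand side is bounded above by $-2\kappa_0 {\cal V}(t)$, and Gronwall's inequality delivers the claimed exponential bound.

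The main obstacle I foresee is purely the cross-term bookkeeping: one has to verify that the ${\bf P}$-contribution produced by the second-moment identity matches exactly the ${\bf P}$-contribution arising when differentiating $\rho\vert {\bf \overline{V}}\vert^2$ through the first-moment equation, so that only the $\kappa$-weighted variance survives. Regularity questions are mild because of the compactness of $K$; the ``sufficiently regular weak solution'' hypothesis is needed precisely to legitimize the interchange of $\partial_t$ with integration in $x$ and to ensure that $\partial_t {\bf \overline{V}}$ exists pointwise where $\rho>0$.
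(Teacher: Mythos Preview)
Your proposal is correct and follows essentially the same approach as the paper's own proof: both derive the first-moment equation for $\partial_t {\bf \overline{V}}$, then compute $\frac{d}{dt}{\cal V}(t)$ and observe that the nonlocal ${\bf p}$-contributions cancel, leaving exactly $-2\int \kappa(x)\int |{\bf v}-{\bf \overline{V}}|^2 f~d{\bf v}~dx$, from which Gronwall gives the exponential bound. The only cosmetic difference is that the paper differentiates $\frac{1}{2}\int |{\bf v}-{\bf \overline{V}}|^2 f~dz$ directly via the product rule (picking up a $\partial_t f$ term and a $\partial_t {\bf \overline{V}}$ term), whereas you first apply the variance decomposition ${\cal V}=\int |{\bf v}|^2 f - \int \rho|{\bf \overline{V}}|^2$ and differentiate the two pieces separately; the algebra and the cancellation are identical.
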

\begin{proof}
Integrating \eqref{eq:dialecttransport} with respect to ${\bf v}$ we find 
$$   \partial_t {\bf \overline{V}}(x,t)   =  - \kappa(x) {\bf \overline{V}}(x,t)  +     \int w(x,\tilde x)  f(\tilde x,{\bf \tilde v},t)   {\bf p}({\bf \tilde v})   ~d\tilde z 
. $$
Thus, 
\begin{align*}
& \frac{d}{dt} \frac{1}2 \int \int \vert{\bf v} - {\bf \overline{V}}(x,t) \vert^2 f(x,{\bf v},t)~dz \\ &\qquad = 
\frac{1}2 \int   \vert{\bf v} - {\bf \overline{V}}(x,t) \vert^2 \partial_t f(x,{\bf v},t)~dz 
- \int ({\bf v} - {\bf \overline{V}}(x,t))\cdot \partial_t {\bf \overline{V}}(x,t) f(x,{\bf v},t)~dz\\
	&\qquad= - \int \int w(x,\tilde x) ({\bf v} - {\bf \overline{V}}(x,t))\cdot f(x,{\bf v},t)   f(\tilde x,{\bf \tilde v},t) ( {\bf v} - {\bf p}({\bf \tilde v})) ~d\tilde z~dz +
	\\ &\qquad \qquad +  \int w(x,\tilde x)({\bf v} - {\bf \overline{V}}(x,t))\cdot (\kappa(x){\bf \overline{V}}(x,t) - \int   f(\tilde x,{\bf \tilde v},t)   {\bf p}({\bf \tilde v})   ~d\tilde z) f(x,{\bf v},t)~dz \\
	&\qquad = - \int \kappa(x) \vert{\bf v} - {\bf \overline{V}}(x,t) \vert^2   f(x,{\bf v},t)~dz.
\end{align*}
The assertions follow directly, respectively with Gronwall's lemma.
\end{proof}

Let us mention that analogous statements can be derived for other moments $p \geq$, those are equivalent for estimates of the metric
$$ d_p(f,\rho \delta_{\bf \overline{V}}) = \left(\int_\Omega W_p(f(x,\cdot),\rho(x) \delta_{{\bf \overline{V}}(x,\cdot)})^p~dx\right)^{1/p}, $$
with $W_p$ being the $p$-Wasserstein metric (taking into account the explicit form of Wasserstein metrics if one measure is concentrated).
Together with a stability estimate on solutions of \eqref{eq:dialecttransport}, we see that it can be expected that solutions are close to monokinetic ones, we leave a more quantitative analysis to future research.

\subsection{Spatially Local Approximation}

As a last step we consider the case of $w$ being a spatially local kernel, for simplicity we assume it is convolutional and moreover $\rho \equiv 1$ on a domain $\Omega \subset \R^d$. The local kernel is scaled such that
$$ w(x,\tilde x) = \epsilon^{-d} k(\epsilon^{-1} (x-\tilde x)) $$
and $k$ is assumed to be even. 
Then we find for a function $\varphi$ being smooth
$$ \int w(x,\tilde x) (\varphi(\tilde x) - \varphi(x)) ~dx = \frac{C \epsilon^2}2 \Delta \phi + {\cal O}(\epsilon^4), $$
with $C$ being the second moment of $k$. Using the notation $\sigma = \sqrt{C} \epsilon$ we obtain the following approximation for monokinetic solutions:
$$ \partial_t V = - \kappa (V - {\bf p}(V)) + \frac{\sigma^2}2 \Delta {\bf p}(V).$$
This is a nonlinear reaction-diffusion equation that was originally derived by Burridge \cite{burridge1}. For $M=2$  rigorous existence and uniqueness of classical and weak solutions (globally in time) can be shown (cf. \cite{doss}), for $M >2$ only local existence of classical solutions is known so far. Global existence and a quantitative analysis of the coarsening dynamics is a challenging open problem  due to the degenerate nonlinear cross-diffusion effects and the absence of a gradient flow structure. Let us mention that by introducing the the inverse function of ${\bf p}$, denoted by ${\bf V}$, we can equivalently formulate an equation for the vector ${\bf P}$ of probabilities
$$ \partial_t {\bf V}(P)  = - \kappa ({\bf V}(P) - P) + \frac{\sigma^2}2 \Delta  P.$$
 
From the derivation of the local equation we naturally expect $\kappa >> \sigma^2$. Using a time scaling such that $\sigma$ is of order one, we see that $\kappa$ is a large parameter, thus to leading order we have
$V = {\bf p}(V)$, so the approximation by the standard Allen-Cahn equation
$$ \partial_t V = - \kappa (V - {\bf p}(V)) + \frac{\sigma^2}2 \Delta V$$
respectively
$$ \partial_t P  = - \kappa ({\bf V}(P) - P) + \frac{\sigma^2}2 \Delta  P$$
may be equally accurate in the local limit. Note that the approximation for $V$ is the corresponding local approximation to the reaction-diffusion model \eqref{eq:dialectreactiondiffusion}, so at least in this scaling limit we expect the two modelling approaches to coincide.

\section{Case Study: Social Construction}

The paper by Shaw \cite{shaw} proposes an agent-based model of social learning, using a network of interaction between agents. In the model there are $M$ (in particular $M=4$ in \cite{shaw}) different mental representations of a social actions, each with a different weight $v_i$. In an interaction with another agent, who plays action $i$, the vector ${\bf \omega}$ of weights is updated via
$$ {\bf \omega}_i' = {\bf \omega} + e_i. $$
On the other hand, given a weight vector ${\bf w}$, the action with highest weight is played in the next interaction, respectively one of those with highest weigths is chosen with uniform probability if there are multiple ones. We can interpret this choice as a generalization of the probabilities $p_i$ in the dialect model above to a concentrated probability measure, it actually corresponds to the limit $\alpha \rightarrow 0$ in \eqref{eq:piburridge}. Moreover, the network interaction is rather discrete with $N$ agents and associated interaction weights $w_{k,\ell}$ between agents $k$ and $\ell$. 

In order to derive a Boltzmann-type  model we perform a suitable rescaling of the states from $w_i$ to 
$$ v_i = \frac{\omega_i}{\sum_{j=1}^M \omega_j} , $$
and  the number of interactions $I$ to $s = \frac{I}J$ for some reasonably large $J$.
The weight update in the interactions thus becomes
$${\bf v}_i' = \frac{s}{s+h} {\bf v} + \frac{h}{s+h} e_i, \quad s'=s+h $$
with $h=\frac{1}J$. The measure $\nu_{z,\tilde z}$ is given by
$$ \nu_{z,\tilde z} = \delta_0(da) \otimes \sum_{i=1}^M \delta_{b_i}(db) p_i({\bf \tilde v}) $$
with $b_i({\bf v},s) = (\frac{h}{s+h} (e_i-{\bf v},h)$.
 
Assuming again the existence of a single particle density $f_k({\bf v},S,t)$ on $\{1,\ldots,N\} \times K \times \R_+$ for $t > 0$  we obtain the Boltzmann equation in weak formulation as 
\begin{align*} 
&\frac{d}{dt} \int  \varphi( {\bf v},s) f_k({\bf v},s,t)~dz =   
\sum_i \sum_\ell w_{k \ell} \int \int    ( \varphi( {\bf v}_i',s')  - \varphi( {\bf v},s) ) p_i({\bf \tilde v}) f_k({\bf v},s,t)   f_\ell({\bf \tilde v},\tilde s,t)   d\tilde z~dz
\end{align*}
with $z=({\bf v},s)$.
Again existence and uniqueness of solutions can be shown by ODE arguments, in this case the density
$$ \rho_k = \int  f_k({\bf v},s,t) ~dz $$
is stationary. 


Using smallness of $h$ and rescaling time with $h$ we can derive the Vlasov  approximation 
\begin{align*} 
&\frac{d}{dt} \int  \varphi( {\bf v},s) f_k({\bf v},s,t)~dz = \\   & \qquad 
 \sum_\ell w_{k \ell} \int \int   \left( \nabla_{\bf v}  \varphi( {\bf v},s) )     \frac{1}{s} ({\bf p}({\bf \tilde v}) - {\bf v})  +   \partial_s \varphi( {\bf v},s) )  
\right)f_k({\bf v},s,t)   f_\ell({\bf \tilde v},\tilde s,t)~d\tilde z~dz.
\end{align*}
In strong form we obtain 
$$ \partial_t f_k({\bf v},s,t) + \nabla_{\bf v} \cdot \left( f_k({\bf v},s,t) \sum_\ell w_{k \ell}
\int  \frac{1}{s} ({\bf p}({\bf \tilde v}) - {\bf v}) f_\ell({\bf \tilde v},\tilde s,t)~d\tilde z \right)
+ \partial_s (f_k({\bf v},s,t) \lambda_k) = 0$$
with $ \lambda_k = \sum_\ell w_{k \ell} \rho_\ell $.
Possible monokinetic solutions are characterized by
$$
\frac{d{\bf V}_k}{dt} = \sum_\ell w_{k \ell}  \frac{1}{s_k} ({\bf p}({\bf V}_\ell) - {\bf V}_k), \qquad\frac{ds_k}{dt} = \lambda_k,
$$
which can be simplified to 
$$
\frac{d{\bf V}_k}{dt} = \sum_\ell w_{k \ell}  \frac{1}{s_k^0 + \lambda_k t} ({\bf p}({\bf V}_\ell) - {\bf V}_k).  
$$

The structure of the equation analogous to the dialect model above makes the phase separation and coarsening behaviour, i.e. the emergence of few social norms, quite clear. Due to the time-dependent weighting of the interactions it might be expected that equilibria are more dynamic however. 
Let us mention that using the discontinuous choice of ${\bf p}$ as in \cite{shaw} the existence and uniqueness of monokinetic solutions as well as of characteristics in the Vlasov equation cannot be shown easily and remains an interesting question for future research.

\section{Case Study: Pandemic Spread}

Modelling disease spread is nowadays a classical problem in applied mathematics (cf. \cite{capasso}), and in particular standard reaction and reaction-diffusion models are now a standard tool in epidemiology (cf. \cite{donofrio,smith}). However, nowadays diseases are spread by short-time travelers rather than by people moving to other locations. This is apparent in particular in the Covid-19 pandemic, where early infections in many countries and areas are due to short term travels (cf. e.g. \cite{badshah,correa,gudbjartsson,goessling}), so that human mobility networks may be a more relevant modelling structure (cf. e.g. \cite{bajardi,belik,kuchler,queiroz}).

In order to illustrate the effects we study a network-structured SIR-model in the following. This is a discrete state model with the three different states $S$ for susceptible, $I$ for infected, and $R$ for removed. There is only one pair interaction happening, namely between susceptible and infected, with the latter one being the active agent not changing its state, while the susceptible changes to infected. Moreover, the infected get removed at constant rate $\beta$. This yields.
\begin{align}
\partial_t \rho_S(x,t) &= - \int w(x,\tilde x) \rho_S(x,t) \rho_I(\tilde x,t) ~d\tilde x \label{eq:SIRnonlocal1a}\\
\partial_t \rho_I(x,t) &= \int w(x,\tilde x) \rho_S(x,t) \rho_I(\tilde x,t) ~d\tilde x - \beta \rho_I(x,t) \label{eq:SIRnonlocal2a}\\
\partial_t \rho_R &= \beta \rho_I
\end{align}

As usual in the SIR model we can ignore $\rho_R$ and simply consider the two-time-two system \eqref{eq:SIRnonlocal1a} and \eqref{eq:SIRnonlocal2a}. For brevity we denote the densities of susceptibles and infectives by $u$, $v$ instead of $\rho_S$, $\rho_I$. By introducing the nonlocal Laplacian nonlocal Laplacian
\begin{equation}
\Delta_w \varphi(x) = \int w(x,\tilde x) (\varphi(\tilde x) - \varphi(x))~dx. 
\end{equation} 
and using the notation $\alpha(x) = \int w(x,\tilde x) ~d\tilde x$  we can rewrite 
the system in a reaction-diffusion form 
\begin{align}
\partial_t u &= - \alpha u v - u \Delta_w v, \label{eq:SIRnonlocal1} \\ 
\partial_t v &= \alpha u v + u \Delta_w v - \beta v,  \label{eq:SIRnonlocal2}
\end{align}
This allows to give some comparison to the more standard reaction-diffusion models of epidemics, respectively their nonlocal version (cf. e.g. \cite{belik})
\begin{align}
\partial_t u &= - \alpha u v +D_1  \Delta_w u, \label{eq:SIRnonlocal1RD} \\ 
\partial_t v &= \alpha u v +  D_2 \Delta_w v - \beta v,  \label{eq:SIRnonlocal2RD}
\end{align}
The key difference is the linearity and non-degeneracy in the diffusion part, which induce a dispersal of both $u$ and $v$, while only $v$ disperses in the network-structured model. 

The behaviour of \eqref{eq:SIRnonlocal1}, \eqref{eq:SIRnonlocal2} is illustrated in Figure \ref{fig1} together with a comparison to the nonlocal reaction-diffusion model \eqref{eq:SIRnonlocal1RD}, \eqref{eq:SIRnonlocal2RD}. Those are based on a numerical solution of the models on the unit interval with periodicity, using the kernel 
$ w(x,y) = \alpha (x_0 - |x-x_0|)_+$ with $x_0=0.2$, $\alpha=0.3$, and $\beta =0.1$. The initial value of $u$ is constant equal to one, while the initial value of $v$ is a peak at $x=0.5$. The spatial grid size used is $h=0.01$ and the time step $\tau = 0.01$. We see that the overall dynamics in the two models is  similar, but the nonlocal reaction-diffusion model smoothes the peak in the infected population stronger (see time sequence of $u$ on the left), while the network structured model does not introduce a local peak in the susceptible one (see time sequence of $v$ on the right). As a consequence, the network structured model predicts a higher number of infected persons in the long run. 

\begin{figure}
\begin{center}
\includegraphics[width=0.35\textwidth]{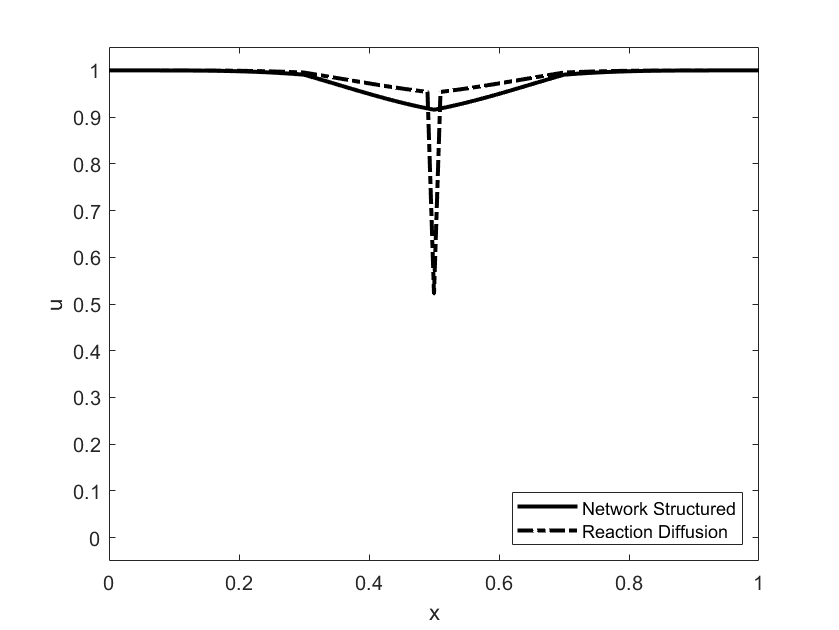} \includegraphics[width=0.35\textwidth]{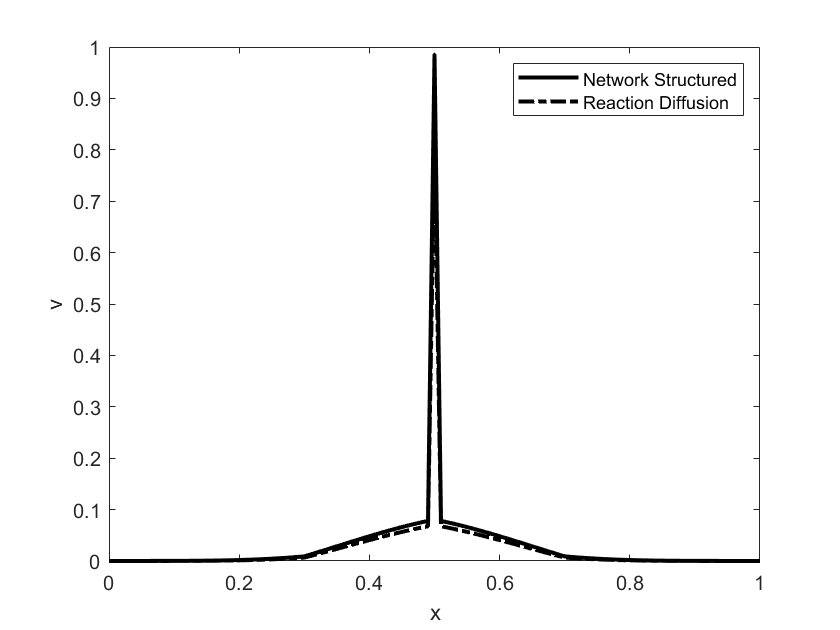}
\includegraphics[width=0.35\textwidth]{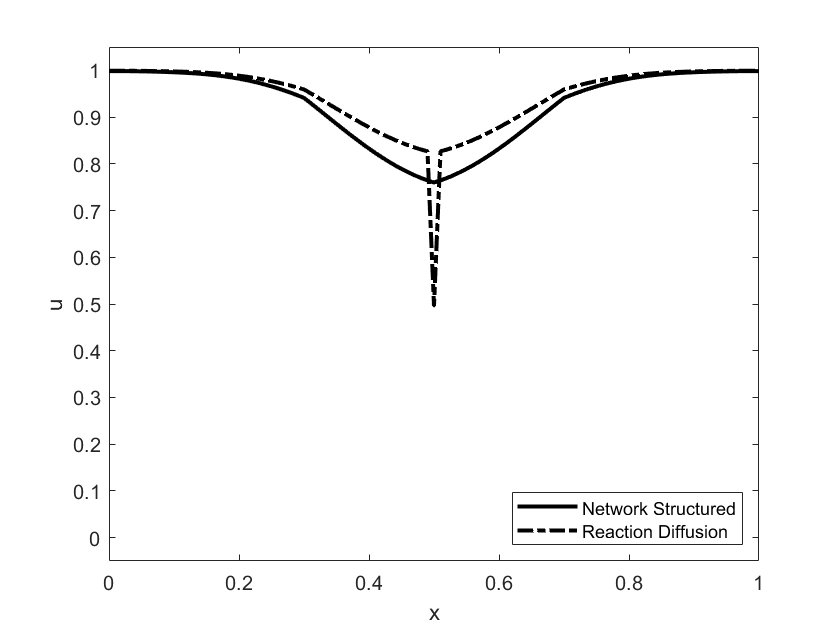} \includegraphics[width=0.35\textwidth]{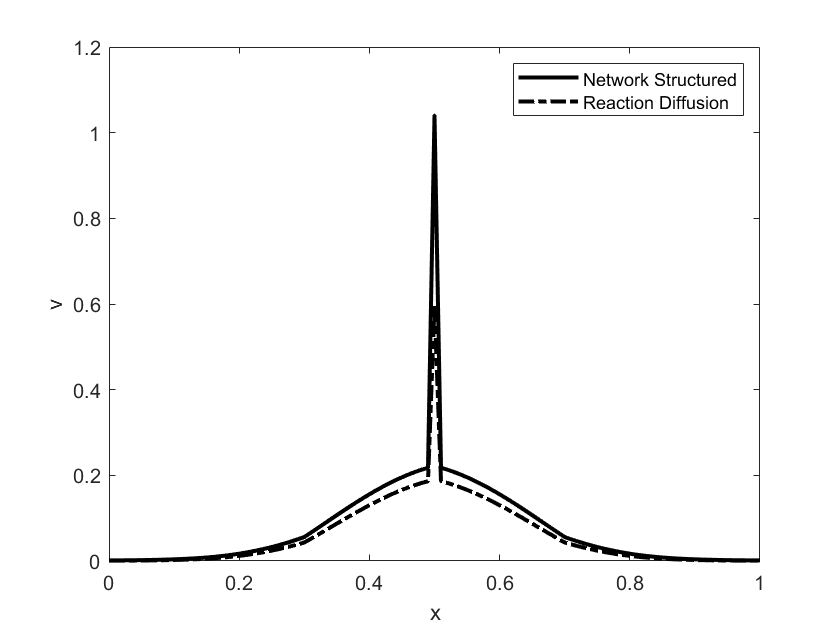}
\includegraphics[width=0.35\textwidth]{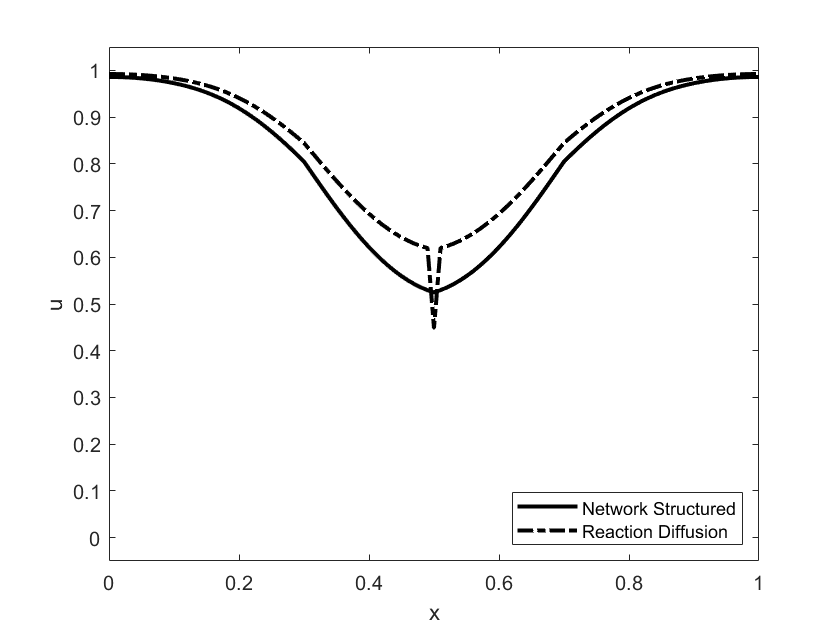} \includegraphics[width=0.35\textwidth]{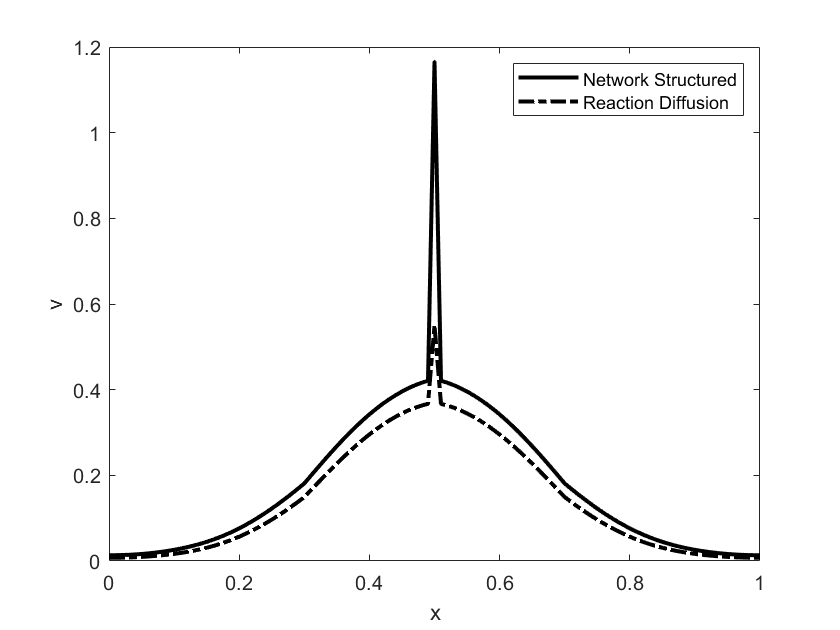}
\includegraphics[width=0.35\textwidth]{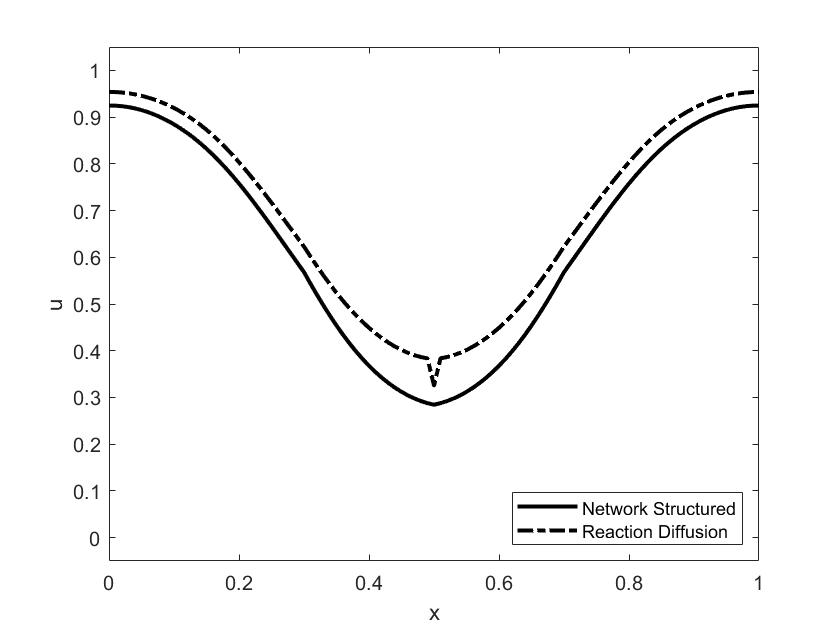} \includegraphics[width=0.35\textwidth]{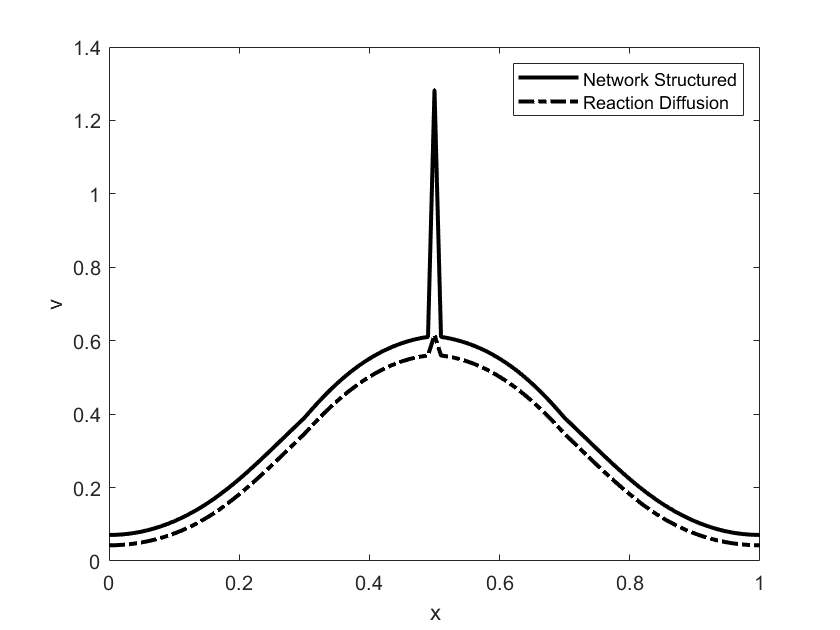}
\includegraphics[width=0.35\textwidth]{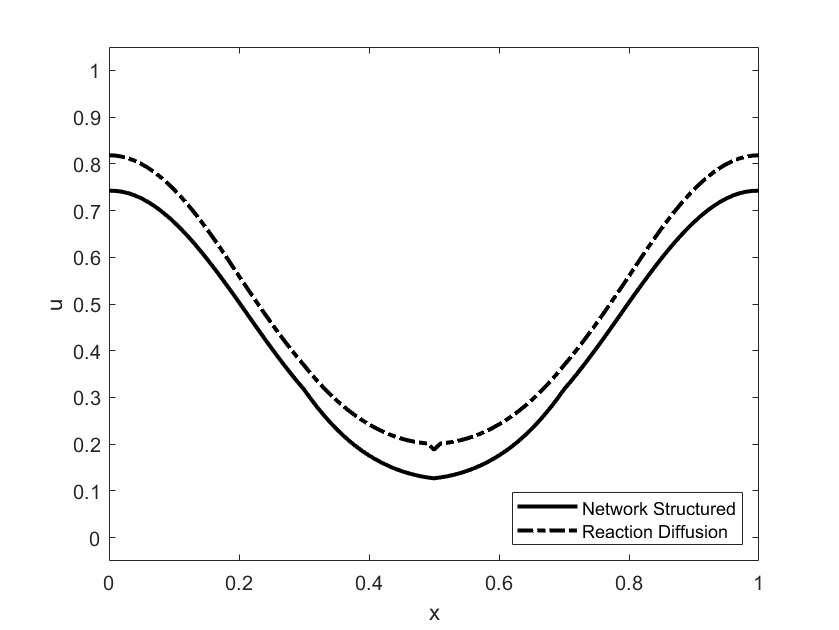} \includegraphics[width=0.35\textwidth]{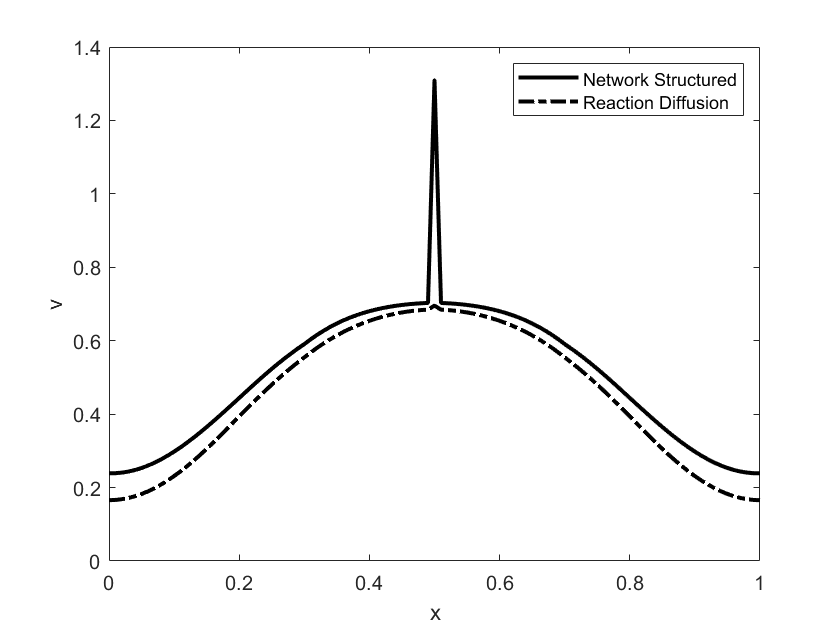}
\caption{Evolution of $u$ (left) and $v$ (right) at time steps $t=1,2,3,4,5$ with the network structured model in 
full lines and the nonlocal reaction-diffusion model in dash-dotted lines. \label{fig1}}
\end{center}
\end{figure}

Let $\Omega \subset \R^d$ be the maximal support of all involved functions, then we can confine the problem to $\Omega$ and provide a straightforward analysis:
 
\begin{prop}
Let $u^0 \in L^\infty(\Omega)$, $v^0 \in L^\infty(\Omega)$ be nonnegative initial values, and $w  \in L^\infty(\Omega;L^1(\Omega))$ be nonnegative. Then there exists a unique nonnegative solution $u \in C^1(0,T;L^\infty(\Omega))$, $v \in C^1(0,T;L^\infty(\Omega))$ of \eqref{eq:SIRnonlocal1}, \eqref{eq:SIRnonlocal2}. The solution satisfies 
$$ 0 \leq u(x,t) + v(x,t) \leq u_0(x) + v_0(x) $$
for almost every $x \in \Omega$ and every $t \in [0,T]$. Moreover, $\partial_t u $ is nonpositive almost everywhere. 
\end{prop}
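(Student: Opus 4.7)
The strategy is to rewrite the system in a form that makes the structure transparent, then run a Banach fixed point argument in $C([0,T]; L^\infty(\Omega))^2$ that simultaneously gives local existence, uniqueness, and nonnegativity; the $L^\infty$ envelope bound then yields global existence for free and makes $\partial_t u \le 0$ immediate. Introduce the bounded linear operator
\[
K[v](x) = \int w(x,\tilde x)\, v(\tilde x)\, d\tilde x,
\]
which maps $L^\infty(\Omega)$ into itself with $\|K[v]\|_\infty \leq \|w\|_{L^\infty(\Omega;L^1(\Omega))} \|v\|_\infty$, and is positivity-preserving because $w \geq 0$. Since $\alpha(x) v(x) + \Delta_w v(x) = K[v](x)$, the system \eqref{eq:SIRnonlocal1}--\eqref{eq:SIRnonlocal2} simplifies to $\partial_t u = -u\, K[v]$ and $\partial_t v = u\, K[v] - \beta v$.

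For local existence, uniqueness, and nonnegativity in one stroke I would use the Duhamel-type map $\mathcal{T} = (\mathcal{T}_1,\mathcal{T}_2)$ defined by
\[
\mathcal{T}_1(u,v)(x,t) = u_0(x)\exp\!\Bigl(-\!\int_0^t K[v(\cdot,s)](x)\, ds\Bigr),
\]
\[
\mathcal{T}_2(u,v)(x,t) = e^{-\beta t} v_0(x) + \int_0^t e^{-\beta(t-s)} u(x,s)\, K[v(\cdot,s)](x)\, ds.
\]
Differentiating shows that fixed points of $\mathcal{T}$ are classical solutions of the system. On a short time interval $[0,\tau]$ and the ball of radius $R = 2(\|u_0\|_\infty + \|v_0\|_\infty)$ in $C([0,\tau]; L^\infty(\Omega))^2$, routine estimates exploiting the bilinearity of $u K[v]$ and the boundedness of $K$ show that $\mathcal{T}$ is a self-map and, for $\tau$ sufficiently small, a contraction; here the exponentials stay bounded by $1$ because $K[v] \geq 0$ whenever $v \geq 0$. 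Hence there is a unique local fixed point, and because $\mathcal{T}$ sends the nonnegative cone into itself (positive exponentials combined with the positive operator $K$) the solution is nonnegative. Uniqueness among all $C^1([0,T]; L^\infty)$-solutions then follows from a direct Gronwall estimate on the difference of two candidate solutions.

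For the global bounds, adding the two equations yields $\partial_t(u+v) = -\beta v \leq 0$ pointwise a.e., so $u+v$ is nonincreasing in $t$ and the envelope $0 \leq u(x,t) + v(x,t) \leq u_0(x) + v_0(x)$ follows. This gives an a priori $L^\infty$ bound independent of the time horizon, and a standard continuation argument then promotes the local solution to a global one on $[0,T]$ for any $T > 0$. Finally $\partial_t u = -u K[v] \leq 0$ a.e. is immediate from $u, v, w \geq 0$. The only subtle step I anticipate is the positivity of $v$ itself, which is coupled to its own nonlocal average through $K[v]$; the Duhamel formulation above bypasses this because the Picard iterates remain in the nonnegative cone and the limit inherits this property, but a clean alternative is to set $V = e^{\beta t} v$ so that $\partial_t V = u(\cdot,t) K[V]$ and invoke positivity of $\exp(tB)$ for the bounded positive operator $B: V \mapsto u(\cdot,t) K[V]$.
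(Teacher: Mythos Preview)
Your argument is correct and follows the same overall template as the paper: treat the system as an ODE in $L^\infty(\Omega)^2$, obtain a local solution by a Banach fixed-point/Picard--Lindel\"of argument, secure nonnegativity, and continue globally via an $L^\infty$ a~priori bound. The differences are in execution rather than in strategy. The paper applies Picard--Lindel\"of directly and argues that the set $\{0\le u\le \Vert u_0\Vert_\infty,\ 0\le e^{\beta t}v\le 2\Vert v_0\Vert_\infty\}$ is invariant, then observes that the resulting local step size $\tau=(2\Vert u_0\Vert_\infty\Vert w\Vert_{L^\infty(L^1)})^{-1}$ is uniform and iterates. You instead encode the system in a Duhamel/mild form so that nonnegativity is inherited automatically by the Picard iterates (a cleaner way to get $u,v\ge 0$), and you continue globally using the pointwise envelope $\partial_t(u+v)=-\beta v\le 0$, which in fact is the very bound asserted in the statement. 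Both routes work; yours makes the positivity step and the continuation mechanism more transparent, while the paper's invariant-set formulation packages the same information slightly differently. One small point: when you set up the contraction on the ball of radius $R$ you invoke $K[v]\ge 0$ to keep the exponential $\le 1$, but elements of that ball need not be nonnegative; either bound the exponential by $\exp(\tau\Vert w\Vert_{L^\infty(L^1)}R)$ without sign information, or (as your later cone remark suggests) run the contraction on the intersection of the ball with the nonnegative cone, which is closed and $\mathcal{T}$-invariant.
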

\begin{proof}
The existence and uniqueness follows from a direct application of the Picard-Lindel\"of Theorem in $L^\infty(\Omega)^2$ (cf. \cite{brezis}), respectively a localized version. We apply the result first to obtain existence in the interval $[0,\tau]$ with  time step 
$$ \tau = \frac{1}{2 \Vert u_0 \Vert_{L^\infty(\Omega)}  \Vert w  \Vert_{L^\infty(\Omega;L^1(\Omega))}} $$ 
to establish existence of a solution in the invariant subset ${\cal I} \subset C(0,T;L^\infty(\Omega))$
$$ {\cal I} = \{ 0 \leq u \leq \Vert u_0 \Vert_\infty, 0 \leq e^{\beta t} v \leq 2 \Vert \tilde v_0 \Vert_{L^\infty(\Omega)}.$$ 
Since $\tau$ is uniform we can incrementally apply the same result to obtain existence and uniqueness of nonnegative solutions in an arbitrary interval $[0,T]$.
Finally nonpositivity of $\partial_t u$ follows from those of $u$ and $v$ directly from \eqref{eq:SIRnonlocal1}.
\end{proof}

As in the case of the dialect model, we can investigate the local limit, which, after appropriate scaling is of the form
\begin{align}
\partial_t u &= - \alpha u v - u \Delta  v \label{eq:SIR1local}\\
\partial_t v &= \alpha u v + u \Delta v - \beta v.  \label{eq:SIR2local}
\end{align}
The system \eqref{eq:SIR1local}, \eqref{eq:SIR2local} is a rather degenerate cross-diffusion system. It is easy to see that the operator $(u,v) \mapsto (u\Delta v,u \Delta v)$ respectively its linearization are 
not normally elliptic, hence the standard theory for parabolic systems (cf. e.g. \cite{amann}) does not apply. 
Even worse, we see that \eqref{eq:SIR1local} destroys some basic properties the model should naturally inherit, such as the nonpositivity of $\partial_t u$. If $v$ is locally concave such that  $\Delta v < - \alpha v$ this results in $ \partial_t u> 0 $, which contradicts the modelling assumption that the number of susceptibles cannot increase. The reason is that the local approximation beyond the leading order 
$$\partial_t u = - \alpha u v, \qquad \partial_t v = \alpha u v - \beta v $$
is justified only if the leading order solution is sufficiently small.

\section{Conclusions and Outlook}

In this paper we have derived network-structured kinetic equations and discussed their main properties, illustrated by some applications in human behaviour. We have demonstrated that challenging classes of (nonlocal) PDE systems can already arise as monokinetic equation of Vlasov approximations, further studies of the full model including a nontrivial variance in the state space are an interesting topic for future reseach. Our mainly formal approach also raises several further mathematical questions, e.g. the analysis of Vlasov and Fokker-Planck approximations, as well as the analysis of monokinetic equations and their local limits related to sparse graphs. Formal similarities to more standard models with explicit movement (or abstractly change in the structural variable), which we found in the local limit of monokinetialso raise a further question of 

A rather open topic is the derivation of macroscopic equations beyond monokinetic ones. Since there is no natural distinction into transport in $x$ and collision in $v$ as in standard kinetic models, the derivation of hydrodynamic equations cannot be based on asymptotics in the collision operators, not even by formal asymptotics as in the Hilbert or Chapman-Enskog expansion. The derivation of macroscopic equations is further impeded by the rather complicated and non-symmetric type of interactions found in behavioural sciences. 

An obvious question for extension of the models concerns the modification of the networks in time, which may become relevant e.g. for applications in social networks where the links are created or deleted on the same time scale as other processes like opinion formation. From a mathematical point of view it is a key issue to derive kinetic and macroscopic models including the full network structure, which seems a rather open problem.
 
Another aspect one may naturally ask from a mathematical point of view is the (optimal) control of network-structured problem, in the Boltzmann, Vlasov or monokinetic case. From an ethical point of view, this raises some issues however, e.g. when trying to control (or just influence) opinions on social networks via bots. In other cases control may be benefitial however, e.g. for avoiding pandemic spread or maybe also for counteracting the decrease of cultural diversity.  Similar control problems for finding consensus (cf. \cite{ruizbalet}) may also arise in mean-field models for robot swarms with a network communication structure (cf. \cite{elamvazhuthi}).

\section*{Acknowledgements}

The author acknowledges partial financial support by  European Union’s Horizon 2020 research and
innovation programme under the Marie Sk lodowska-Curie grant agreement No. 777826
(NoMADS) and the German Science Foundation (DFG) through CRC TR 154  "Mathematical Modelling, Simulation and Optimization Using the Example of Gas Networks".


\begin{thebibliography}{99}

\bibitem{allencahn} S.M.Allen, J.W.Cahn, Ground state structures in ordered binary alloys with
second neighbor interactions. Acta Metallurgica, 20 (1972), 423–433.

\bibitem{amann}H.Amann, Dynamic theory of quasilinear parabolic equations  II. Reaction-diffusion systems, Diff.
Int. Eqs. 3 (1990), 13–75.

\bibitem{axelrod}R.Axelrod, The Complexity of Cooperation: Agent-based Models of Competition and Collaboration. Princeton University Press, Princeton, 1997.

\bibitem{badshah}S.L.Badshah, A.Ullah, S.H.Badshah, I.Ahmad, Spread of Novel coronavirus by returning pilgrims from Iran to Pakistan. Journal of Travel Medicine  27 (2020), taaa044.

\bibitem{bajardi}P.Bajardi, C.Poletto, J.Ramasco, M.Tizzoni, V.Colizza, A.Vespignani, Human mobility networks, travel restrictions, and the global spread of 2009 H1N1 pandemic. PloS One  6 (2011).

\bibitem{bardos}C.Bardos, F.Golse, D.Levermore, Fluid dynamic limits of kinetic equations. I. Formal derivations. Journal of Statistical Physics  63 (1991), 323-344.

\bibitem{belik}V.Belik, T.Geisel, D.Brockmann, Natural human mobility patterns and spatial spread of infectious diseases. Physical Review X  1 (2011), 011001.

\bibitem{bouchut}F.Bouchut, F.Golse, M.Pulvirenti: Kinetic Equations and Asymptotic
Theory.  Series in Applied Mathematics, 4, Gauthier-Villars,   Paris, 2000.

\bibitem{boudinsalvarani}L.Boudin, F.Salvarani,  A kinetic approach to the study of opinion formation. ESAIM: Mathematical Modelling and Numerical Analysis  43 (2009), 507-522.

\bibitem{braunhepp}W.Braun, K.Hepp: The {V}lasov Dynamics and Its Fluctuations in the 1/N
Limit of Interacting Classical Particles. Commun. Math. Phys. 56 (1977), 101–113.

\bibitem{brezis}H.Brezis, Functional Analysis, Sobolev Spaces and Partial Differential Equations. Springer, New York, 2010.

\bibitem{brugnatoscani}C.Brugna, G.Toscani,  Kinetic models of opinion formation in the presence of personal conviction. Physical Review E  92 (2015), 052818.

\bibitem{burridge1}J.Burridge, Spatial evolution of human dialects. Physical Review X  7(2017), 031008.

\bibitem{burridge2}J.Burridge, Unifying models of dialect spread and extinction using surface tension dynamics, Royal Society open science 5  (2018), 171446.

\bibitem{burgercaffarelli}M.Burger, L.Caffarelli, P.A.Markowich,  Partial differential equation models in the socio-economic sciences, Phil. Trans. Royal Society 372 (2014), 20130406.

\bibitem{capasso}V.Capasso, Mathematical Structures of Epidemic Systems. Lecture Notes in Biomathematics 97, Springer, Berlin, Heidelberg, 1993.

\bibitem{cercignani}C.Cercignani, Mathematical Methods in Kinetic Theory. Plenum Press, New York,  1969.

\bibitem{cercignani2}C.Cercignani,   The Boltzmann Equation and its Applications. Springer, New York, 1988.

\bibitem{cercignani3}C.Cercignani, R.Illner, M.Pulvirenti, The Mathematical Theory of Dilute Gases, Springer, New York, 1994.

\bibitem{coppini}F.Coppini, H.Dietert, G.Giacomin,  A law of large numbers and large deviations for interacting diffusions on Erd\"os–Renyi graphs. Stochastics and Dynamics  20 (2020), 2050010.

\bibitem{correa}C.L.Correa-Martinez, et al, A pandemic in times of global tourism: Superspreading and exportation of COVID-19 cases from a ski area in Austria, Journal of Clinical Microbiology (2020), to appear.

\bibitem{degond}P.Degond, Macroscopic limits of the Boltzmann equation: a review. In: P.Degond, L.Pareschi, G.Russo, eds., Modeling and Computational Methods for Kinetic Equations. Birkh\"auser, Boston, MA, 2004, 3-57.

\bibitem{delattre}S.Delattre, G.Giacomin, E.Lucon A note on dynamical models on random graphs and Fokker–Planck equations. Journal of Statistical Physics, 165 (2016), 785-798.

\bibitem{dobrushin}R.Dobrushin: Vlasov equations. Funct. Anal. Appl. 13 (1979), 115–123.

\bibitem{donofrio}A. d’Onofrio, P. Manfredi, E. Salinelli, Vaccination behaviour, information,
and the dynamics of SIR vaccine preventable diseases. Theor. Popul. Biol.
71 (2007), 301–317

\bibitem{doss}M.Doss, Analysis of a Degenerate Parabolic Equation Describing the Spatial
Evolution of Human Dialects, MSc Thesis, FAU Erlangen-N\"urnberg, 2020.

\bibitem{duering}B.D\"uring, M.T.Wolfram, Opinion dynamics: Inhomogeneous Boltzmann-type equations modelling opinion leadership and political segregation. Proceedings of the Royal Society A: Mathematical, Physical and Engineering Sciences, 471 (2015), 20150345.

\bibitem{duncan}D.B.Duncan, M.Grinfeld, I.Stoleriu,  Coarsening in an integro-differential model of phase transitions. European Journal of Applied Mathematics  11 (2000), 561-572.

\bibitem{elamvazhuthi}K.Elamvazhuthi, S.Berman, Mean-field models in swarm robotics: A survey. Bioinspiration and Biomimetics 15 (2019), 015001.

\bibitem{furioli}G.Furioli, A.Pulvirenti, E.Terraneo, G.Toscani, Fokker-Planck equations in the modeling of socio-economic phenomena. Mathematical Models and Methods in Applied Sciences, 27 (2017), 115-158.

\bibitem{fraiatosin}M.Fraia, A.Tosin, The Boltzmann legacy revisited: kinetic models of social interactions. arXiv preprint (2020), arXiv:2003.14225.

\bibitem{gavin}M.Gavin, An agent-based computational approach to “The Adam Smith Problem”. Historical Social Research/Historische Sozialforschung, 43 (2018), 308-336.

\bibitem{glassey}R.T.Glassey, The Cauchy Problem in Kinetic Theory. SIAM, Philadelphia, 1996.

\bibitem{goessling}S.G\"ossling, D.Scott, C.M.Hall, Pandemics, tourism and global change: a rapid assessment of COVID-19. Journal of Sustainable Tourism (2020), to appear.

\bibitem{golse}F.Golse, On the dynamics of large particle systems in the mean field limit. In: A.Muntean, J.Rademacher, A.Zagaris, Macroscopic and large scale phenomena: coarse graining, mean field limits and ergodicity. Springer, Cham, 2016, 1-144.

\bibitem{grieve}J.Grieve, C.Montgomery, A.Nini, A.Murakami, D.Guo, Mapping lexical dialect variation in British English using Twitter. Frontiers in Artificial Intelligence  2 (2019), 11.

\bibitem{gudbjartsson}D.F.Gudbjartsson, et al, Spread of SARS-CoV-2 in the Icelandic population, New England Journal of Medicine 382 (2020), 2302-2315

\bibitem{hawkins}R.X.Hawkins, N.D.Goodman, R.L.Goldstone, The emergence of social norms and conventions. Trends in Cognitive Sciences 23 (2019), 158-169.

\bibitem{heppenstall}A.J.Heppenstall, A.T.Crooks, L.M.See, M.Batty, eds.,  Agent-based models of geographical systems. Springer   ,2011.

\bibitem{helbing}D.Helbing, Boltzmann-like and Boltzmann-Fokker-Planck equations as a foundation of behavioral models. Physica A: Statistical Mechanics and its Applications 196  (1993), 546-573.

\bibitem{glaser}P.Jeszenszky, P.Stoeckle, E.Glaser, R.Weibel, A gradient perspective on modeling interdialectal transitions. Journal of Linguistic Geography  6  (2018), 78-99.

\bibitem{klein}D.Klein, J.Marx, K.Fischbach,   Agent-based modeling in social science, history, and philosophy. an introduction. Historical Social Research/Historische Sozialforschung, 43 (2018), 7-27.

\bibitem{kohnotto}R.V.Kohn, F.Otto, Upper bounds on coarsening rates. Communications in Mathematical Physics 229 (2002), 375-395.

\bibitem{kuchler}T.Kuchler, D.Russel, J.Stroebel, The geographic spread of COVID-19 correlates with structure of social networks as measured by Facebook. ArXiv Preprint (2020), 	arXiv:2004.03055.

\bibitem{levermore}C.D.Levermore, Moment closure hierarchies for kinetic theories. Journal of Statistical Physics  83 (1996), 1021-1065.

\bibitem{loreto}V.Loreto, A.Baronchelli, A.Mukherjee, A.Puglisi, F.Tria, Statistical
physics of language dynamics. Journal of Statistical Mechanics: Theory and Experiment 
2011 (2011), P04006

\bibitem{naldi}G.Naldi, L.Pareschi, G.Toscani, eds., Mathematical Modeling of Collective Behavior in Socio-Economic and Life Sciences. Springer, New York, 2010.

\bibitem{neunzert}H.Neunzert, J.Wick, Theoretische und numerische Ergebnisse zur nichtlinearen Vlasov-Gleichung. In: R.Ansorge, Numerische L\"osung nichtlinearer partieller Differential-und Integrodifferentialgleichungen. Springer, Berlin, Heidelberg, 1972, 159-185.

\bibitem{pareschitoscani}L.Pareschi, G.Toscani, Interacting Multiagent Systems: Kinetic Equations and Monte Carlo Methods, Oxford University Press, Oxford (2014).

\bibitem{pareschitoscani2}L.Pareschi, G.Toscani, Wealth distribution and collective knowledge: a Boltzmann approach. Philosophical Transactions of the Royal Society A: Mathematical, Physical and Engineering Sciences  372 (2014), 20130396

\bibitem{pareschitoscanitosin}L.Pareschi, G.Toscani, A.Tosin, M.Zanella, Hydrodynamic models of preference formation in multi-agent societies. Journal of Nonlinear Science 29 (2019), 2761–2796.

\bibitem{pereda}M.Pereda, P.Branas-Garza, I.Rodriguez-Lara, A.Sanchez, The emergence of altruism as a social norm. Scientific reports  7  (2017), 1-10.

\bibitem{queiroz}L.Queiroz, et al, Large-scale assessment of human mobility during COVID-19 outbreak. Preprint (2020). 

\bibitem{ruizbalet}D.Ruiz-Balet, E.Zuazua,  A parabolic approach to the control of opinion spreading. In: A.Berezovski, T.Soomere, Applied Wave Mathematics II. Springer, Cham, 2019, 343-363.

\bibitem{shaw}L.Shaw,  Mechanics and dynamics of social construction: Modeling the emergence of culture from individual mental representation. Poetics 52 (2015), 75-90.

\bibitem{smith}T.Smith, et al, Mathematical modeling of the
impact of malaria vaccines on the clinical epidemiology and natural history
of Plasmodium Falciparum malaria: overview. Am. J. Trop. Med. Hyg. 75
 (2006), 1–10.
 
\bibitem{toscani}G.Toscani, Kinetic models of opinion formation. Communications in Math. Sciences 4 (2006), 481-496.

\bibitem{toscanitosin}G.Toscani, A.Tosin, M.Zanella,  Kinetic modelling of multiple interactions in socio-economic systems, arXiv preprint (2019), arXiv:1910.13843, 

\bibitem{tosinzanella}A.Tosin, M.Zanella, Boltzmann-type models with uncertain binary interactions. Communications in Math. Sciences 16 (2018),  963 – 985.

\bibitem{vazquez}F.Vazquez, X.Castello, M.San Miguel,  Agent based models of language competition: macroscopic descriptions and order–disorder transitions. Journal of Statistical Mechanics: Theory and Experiment  2010  (2010), P04007.

\end{thebibliography}
\end{document}